\begin{document}

\title{\sc A note on the symplectic classification of almost-toric systems}
\author{Xiudi Tang}

\date{}

\maketitle

\begin{abstract}
  Since simple semitoric systems were classified about fifteen years ago, and semitoric systems five years ago, we want to move a step forward to almost-toric systems.
  We give a classification of compact almost-toric systems in dimension four up to fiber-preserving symplectomorphisms, in terms of the base, Taylor series, and twisting indices, analogous to the five invariants for semitoric systems.
  For convenience, we specify an ordering of focus-focus values and a choice of two cut rays at each of them.
\end{abstract}

\section{Introduction}
\label{sec:intro}

The classification problem is one of the fundamental subjects in integrable systems.
After the work by Atiyah~\cite{MR642416}, Guillemin--Sternberg~\cite{MR664117}, and Delzant~\cite{MR984900} on the correspondence between compact \emph{toric systems} and Delzant polytopes, Pelayo--V\~u Ng\d{o}c classified all simple \emph{semitoric systems} in dimension $4$ in~\cite{MR2534101, MR2784664}, and the simpleness condition was removed in Palmer--Pelayo--Tang~\cite{MR4797864}, using generalized Delzant polytopes.
We also mention Karshon--Lerman~\cite{MR3371718} where noncompact toric systems were classified.
All these classifications are up to symplectic equivalences, namely fiber-preserving symplectomorphisms.
As far as we know, such a classification for \emph{almost-toric systems} in a similar format has not been available.

Statements for the topological and symplectic equivalence classifications of integrable systems are found in Zung~\cite{MR2018823}.
The former one was realized in Leung--Symington~\cite{MR2670163} for compact almost-toric systems in dimension $4$ (actually, they did this for more manifolds than integrable systems). 
We feel it necessary to construct explicit invariants for almost-toric systems as \cite{MR2534101, MR2784664, MR4797864} did, write down and prove a detailed classification result up to symplectic equivalences.

In \cref{sec:ator-sys} we recall the definition of almost-toric systems.
In \cref{sec:affine-invariant} we give the recipe for the \emph{almost-toric invariants}.
In \cref{sec:classification} we prove that the map assigning the complete almost-toric ingredients to a semitoric system is bijective, so as to prove the classification of almost-toric systems.

\section{Almost-toric systems}
\label{sec:ator-sys}

\begin{definition} \label{def:integrable-system}
  An \emph{integrable system} $(M^{2n}, \om, \mu)$ is a symplectic manifold $(M, \om)$ with a Hamiltonian $\mathfrak{t}^n$-action such that the rank of the momentum map $\mu \colon M \to (\mathfrak{t}^n)^*$ equals $n$ almost everywhere.
\end{definition}

In other words, an integrable system is a symplectic manifold $(M^{2n}, \om)$ equipped with a smooth function $\mu = (f_1, \dotsc, f_n) \colon M \to \R^n \simeq (\mathfrak{t}^n)^*$ such that
\begin{itemize}
  \item the components $f_i$, $i \in \{1, \dotsc, n\}$, Poisson commute; and
  \item their derivatives $\der f_i$, $i \in \{1, \dotsc, n\}$, are linearly independent almost everywhere.
\end{itemize}

\begin{definition} \label{def:atoric-system}
  An integrable system $(M^4, \om, \mu = (J, H))$ is 
  \begin{itemize}
    \item \emph{almost-toric} if all singular points of $\mu$ are nondegenerate and without hyperbolic components, and moreover, $\mu$ is proper and has connected fibers;
    \item \emph{semitoric}, if it is almost-toric, $J$ is proper, and $X_J$ is $2\pi$-periodic;
    \item \emph{toric}, if both $X_J$ and $X_H$ are $2\pi$-periodic.
  \end{itemize}
\end{definition}

A critical point of $\mu$ in an almost-toric system has one of the following three types, by Eliasson's linearization theorem~\cite{Eliasson_1984, MR3098203}.
There are local symplectic coordinates $(x_1, \xi_1, x_2, \xi_2)$ for each type centered at that critical point and a local diffeomorphism $E \colon \R^2 \to \R^2$ such that
\begin{description}
  \item[focus-focus] $E \circ \mu = (x_1\xi_2-x_2\xi_1, x_1 \xi_1 +x_2 \xi_2)$;
  \item[elliptic-regular] $E \circ \mu =( \frac{x_1^2 + \xi_1^2}{2}, \xi_2)$;
  \item[elliptic-elliptic] $E \circ \mu = ( \frac{x_1^2 +\xi_1^2}{2}, \frac{x_2^2 + \xi_2^2}{2})$.
\end{description}

Let $\tcalM$ denote the collection of compact almost-toric integrable systems of dimension $4$.
Two systems $(M, \om, \mu), (M', \om' , \mu') \in \tcalM$ are \emph{symplectically equivalent} via a diffeomorphism $G \colon \mu(M) \to \mu'(M')$ if there is a symplectomorphism $\varphi \colon (M, \om) \to (M', \om')$ lifting $G$, in the sense that $\mu' \circ \varphi = G \circ \mu$.
Let $\calM$ denote the collection of symplectic equivalence classes via orientation-preserving diffeomorphisms of systems in $\tcalM$, which we would like to classify.

\section{The invariants}
\label{sec:affine-invariant}

In \cite[Proposition 3.5]{MR2670163}, the symplectic equivalence classification of almost-toric systems, is written as follows.
In dimension four, an almost-toric system is determined, up to fiber-preserving symplectomorphism, by its base $(B, \calA)$ and the local structure of the fibered neighborhoods of its nodal fibers.
In this section, we give detailed descriptions of the affine and semiglobal invariants.

\subsection{Almost-toric bases}
\label{ssec:ator-base}

Let $(M, \om, \mu) \in \tcalM$ and let $B = \mu(M)$.
Let $M_\rmf \subset M$ be the set of focus-focus singular points of $\mu$ and let $B_\rmf = \mu(M_\rmf) \subset B$.
The $i$-stratum $B^{(i)}$ of $B \setminus B_\rmf$, $i \in \Set{0, 1, 2}$, is the image of elliptic-elliptic, elliptic-regular, regular points, respectively, under $\mu$.

\begin{lemma} \label{lem:strat}
  The collection
  \begin{equation*}
    \calS = \Set{B^{(2)}, B^{(1)}, B^{(0)}, B_\rmf}
  \end{equation*}
  is a \emph{stratification} on $B$ in the sense that
  \begin{itemize}
    \item $\calS$ is a decomposition of $B = \bigsqcup_{X \in \calS} X$;
    \item every stratum is a smooth manifold of a non-negative integer dimension: $B^{(i)}$ has dimension $i$ and $B_\rmf$ has dimension $0$;
    \item $S$ satisfies the \emph{axiom of the frontier}: if $X, Y \in \calS$ and $Y \cap \overline{X} \neq \emptyset$, then $Y \subseteq \overline {X}$; if $Y \subseteq \overline {X}$ we denote $Y \leq X$ and this is partial order among strata which respects the order of dimensions: $B_\rmf < B^{(2)} > B^{(1)} > B^{(0)}$.
  \end{itemize}
\end{lemma}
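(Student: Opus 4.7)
The plan is to read off the local structure of $B$ near each critical value from the Eliasson normal forms listed above, then globalize using the properness and connected-fibers hypotheses.

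First I would verify the decomposition. Every point of $M$ is either regular or a nondegenerate singular point of one of the three types permitted by the no-hyperbolic classification, so $B = B^{(2)} \cup B^{(1)} \cup B^{(0)} \cup B_\rmf$. Because $B^{(i)}$ is defined inside $B \setminus B_\rmf$, disjointness from $B_\rmf$ holds by construction. For the remaining disjointness, I would use that each fiber is compact (by properness) and connected (by hypothesis), so that a non-focus-focus fiber is a single orbit of the joint Hamiltonian flow whose rank is either $0$, $1$, or $2$; this rank determines the stratum of its image. Concretely, an elliptic-elliptic fiber is a single point, an elliptic-regular fiber is a smooth circle of elliptic-regular critical points, and a regular fiber is a smooth $2$-torus, and these three possibilities are mutually exclusive.

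Next I would read off the dimensions and smoothness. The three normal forms present the image of $\mu$ in an Eliasson chart as either a closed quadrant with corner at an elliptic-elliptic value, a closed half-plane with boundary arc of elliptic-regular values, or a full open neighborhood around a focus-focus value. Consequently $B^{(2)}$ is open in $B$ (hence a $2$-manifold), $B^{(1)}$ is a smooth $1$-manifold consisting of open arcs, and $B^{(0)}$ and $B_\rmf$ are discrete and thus finite by compactness of $M$.

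For the axiom of the frontier, the same three local pictures give the three nontrivial inclusions: the corner of a quadrant is a limit of its two boundary edges ($B^{(0)} \subseteq \overline{B^{(1)}}$); the boundary arc of a half-plane is a limit of its interior ($B^{(1)} \subseteq \overline{B^{(2)}}$); and a focus-focus value is an interior point of a neighborhood whose other points are regular values ($B_\rmf \subseteq \overline{B^{(2)}}$). All remaining pairs have empty intersection of the form $Y \cap \overline{X}$, so the axiom holds vacuously for them. The resulting partial order is exactly $B_\rmf < B^{(2)} > B^{(1)} > B^{(0)}$.

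The main obstacle will be the disjointness step, since it rests on the global fact that any non-focus-focus fiber in an almost-toric system is a single orbit of the joint flow; properness, connectedness of fibers, and the absence of hyperbolic components are all used simultaneously here. The remainder of the lemma follows by directly reading off the three Eliasson models.
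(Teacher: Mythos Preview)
The paper states this lemma without proof, so there is nothing to compare against; your plan supplies the standard argument from the Eliasson local models plus properness and connected fibers, and it is sound in outline. Your identification of the disjointness step as the crux is correct, and the cleanest phrasing is that for $c\notin B_\rmf$ the three local models show that the subsets of $\mu^{-1}(c)$ consisting of elliptic--elliptic, elliptic--regular, and regular points are each open in the fiber, so connectedness forces exactly one of them to be nonempty; the ``single orbit'' language is equivalent but slightly less direct.

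One small slip in the frontier step: you list three nontrivial inclusions and then assert that ``all remaining pairs have empty intersection of the form $Y\cap\overline{X}$.'' This is false for $(X,Y)=(B^{(2)},B^{(0)})$, since an elliptic--elliptic value is also a limit of regular values (the interior of the quadrant in that same local model), so $B^{(0)}\subseteq\overline{B^{(2)}}$. The axiom still holds---indeed this inclusion follows by transitivity from $B^{(0)}\subseteq\overline{B^{(1)}}\subseteq\overline{B^{(2)}}$---but the sentence as written should be corrected.
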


Recall the \emph{standard lattice bundle} $\calA_0 = \Z \der x \oplus \Z \der y \subseteq T^* \R^2$.
An \emph{integral affine structure} on $B \setminus B_\rmf$ is a \emph{lattice} $\calA \subseteq T^* (B \setminus B_\rmf)$ that is locally isomorphic to $\calA_0$ with structure group $\AGL(2, \Z)$.

\begin{definition} \label{def:base}
  For $c \in B$ let $2\pi \Period_c \subset T^*_c\R^2$ be the isotropy subgroup of the action on $\mu^{-1}(c)$ by $T^*_c\R^2$, given by the time-$2\pi$ map of the flow of $-\om^{-1} \mu^*\beta$ for any $\beta \in T^*_c\R^2$.
  The abelian group $\Period_c$ is called the \emph{period lattice} of $(M, \om, \mu)$ over $c$
  Then $\calA = \bigcup_{c \in B} \Period_c$ is called the \emph{period bundle} of $(M, \om, \mu)$, which is an integral affine structure on $B \setminus B_\rmf$, and $(B, \calA, \calS)$ is called the \emph{base} of $(M, \om, \mu)$.
\end{definition}

As in~\cite{MR2722115}, $(B, \calA, \calS)$ is an integral affine manifold with corners and nodes.

\subsection{The Taylor series invariants}
\label{ssec:taylor-series}

Since $B_\rmf$ is discrete and finite, we fix an ordering $(c_i)_{i = 1}^\vf$ of $B_\rmf$.
We call the number $m_i \in \N$ of focus-focus singular points in $\mu^{-1} (c_i)$ the \emph{multiplicity} of $c_i$.
There is a choice to make to order these points.
For any smooth function $f$ in a neighborhood of $c_i$ in $B$, the flow of $X_f$ on $\mu^{-1}(c_i)$ is either periodic or traveling among the singular points in two possible orders reverse to each other.
We arrange them in one of the two cyclic orders $(p_{i, u})_{u \in \Z_{m_i}}$; for details see \cite[Section 3.1]{MR4694440}.
Then, the choice of numbering is unique up to cyclic permutation, which is why we take the index $u$ to be in $\Z_{m_i}= \Z/m_i\Z$.

By Eliasson's linearization theorem for non-degenerate focus-focus points~\cite{MR3098203}, there is a neighborhood $U_i$ of $c_i$ in $B$ and for any $u \in \Z_{m_i}$ there is a neighborhood $V_{i, u}$ of $p_{i, u}$ in $M$, a symplectomorphism $\psi_{i, u} \colon (V_{i, u}, \om) \to (\hat{V}_{i, u} \subseteq \R^4, \om_0)$ sending $p_{i, u}$ to $0$, and an orientation-preserving diffeomorphism $E_{i, u} \colon U_i \to \hat{U}_{i, u} \subseteq \R^2$ sending $c_i$ to $0$ such that $\psi_{i, u}$ lifts $E_{i, u}$.
In other words, we have $q \circ \psi_{i, u} = E_{i, u} \circ \mu$, where $q (x_1, \xi_1, x_2, \xi_2) = (x_1 \xi_2 - x_2 \xi_1, x_1 \xi_1 + x_2 \xi_2)$ is the local model of a focus-focus point in $\R^4$, as in \cref{sec:ator-sys}.

The choices of $E_{i, u}$ are not unique, but the one with $X_{\proj_2 \circ E_{i, u}}$ giving the preferred order of focus-focus points in unique up to a flat function; see \cite[Lemma 2.21]{MR4694440}.
Let $J_i = \proj_1 \circ E_{i, u}$ be the abscissa in the $E_{i, u}$-coordinates which is independent of $u \in \Z_{m_i}$, and we let $\ell_i$ be the level set of $J_i$ through $c_i$ in $U_i$.
A \emph{cut ray} $\epsilon_i$ at $c_i$ is the union of $\Set{c_i}$ with one of the two component of $\ell_i \setminus \Set{c_i}$.
In the $E_{i, u}$-coordinates, we could distinguish the two cut rays at $c_i$ by denoting by $\ell_i^+$ the one with greater ordinates and $\ell_i^-$ with less ordinates.

\begin{definition}
  A function $A_i = (A_i^1, A_i^2) \colon U_i \to \R^2$ is called a choice of \emph{local affine coordinates} if
  \begin{itemize}
    \item it is continuous and the restriction $\Res{A}_{U_i \setminus \ell_i^+}$ is smooth;
    \item the first component $A_i^1 = J_i$; in particular, $\der \Res{A_i^1}_c \in \Period_c$ for any $c \in U_i \setminus \Set{c_i}$;
    \item the differential of the second component $\der \Res{A_i^2}_c \in \Period_c$ for any $c \in U_i \setminus \ell_i^+$.
  \end{itemize}
\end{definition}

Identifying $\R^2\cong\C$, let $\ln_+ \colon \C \setminus \imag \R^+ \to \C$ be the determination of $\ln$ with $\ln_+ 1 = 0$ and branch cut at $\imag \R^+$, and let $K_+ \colon \C \setminus \imag \R^+ \to \R$ be given by $K_+(c)= -\Im (c \ln_+ c - c)$.
We define $\tldS_i \colon U_i \setminus \ell_i^+ \to \R$ by
\begin{equation*} \label{eq:def-tS}
  \tldS_i = 2\pi A_i^2 - \sum_{u \in \Z_{m_i}} E_{i, u}^* K_+.
\end{equation*}
By \cite[Lemma 3.5]{MR4694440}, $\tldS_i$ can be extended to a smooth function in $U_i$ (possibly shrunk), which we still denote by $\tldS_i \colon U_i \to \R$.

\begin{definition} \label{def:Taylor-series}
  Let $X = \der x, Y = \der y$.
  Performing a Taylor expansion of $\tldS_i$ around the origin under coordinates $E_{i, u}$, we get a power series
  \begin{equation*} 
    \tsfs_{i, u} = \Tl_0[\tldS_i \circ E_{i, u}^{-1}] = \sum_{p, q = 0}^ \infty \tsfs_{i, u}^{(p, q)} X^p Y^q,
  \end{equation*}
  called the \emph{relative action Taylor series at $p_{i, u}$}, and
  \begin{equation*} 
    \sfs_{i, u} = \tsfs_{i, u} + 2\pi X \Z
  \end{equation*}
  the \emph{action Taylor series at $p_{i, u}$}, where $u \in \Z_{m_i}$.
  Expanding the transition maps between coordinates $E_{i, u}$ and $E_{i, v}$, we get
  \begin{equation*} 
    \sfg_{i, u, v} = \Tl_0[\proj_2 \circ E_{i, v} \circ E_{i, u}^{-1}] = \sum_{p, q = 0}^ \infty \sfg_{i, u, v}^{(p, q)} X^p Y^q
  \end{equation*}
  the \emph{transition Taylor series} from $p_{i, u}$ to $p_{i, v}$, where $u, v \in \Z_{m_i}$.
\end{definition}

\begin{definition} \label{def:Taylor-series-label}
  Let $\sfl_i = [\sfs_{i, u}, \sfg_{i, u, v}]_{u, v \in \Z_{m_i}}$ denote the orbit of $(\sfs_{i, u}, \sfg_{i, u, v})_{u, v \in \Z_{m_i}}$ under the action of $\Z_{m_i}$ by $[w] \cdot (\sfs_{i, u}, \sfg_{i, u, v})_{u, v \in \Z_{m_i}} = (\sfs_{i, u + w}, \sfg_{i, u + w, v + w})_{u, v \in \Z_{m_i}}$ for $w \in \Z$.
  We call $\sfl_i$ the \emph{Taylor series invariant} at $c_i$.
\end{definition}

We note that although the action Taylor series and the transition Taylor series are independent of the choice of local affine coordinates, the relative action Taylor series do depend on them.
Since the Taylor series invariants depend on the choices of the ordering of focus-focus values and cut rays, we give the definition of marked systems.

\begin{definition}
  Let $(M, \om, \mu) \in \tcalM$ and let $B = \mu(M)$.
  Let $(c_i)_{i = 1}^\vf$ be an ordering of $B_\rmf$, and let $(\epsilon_i)_{i = 1}^\vf$ be a tuple of cut rays at each $c_i$.
  Then we call
  \begin{align*}
    \Pa{(M, \om, \mu), (c_i)_{i = 1}^\vf, (\epsilon_i)_{i = 1}^\vf}
  \end{align*}
  a compact \emph{marked almost-toric system} of dimension $4$.
  Let $\tcalM_0$ denote the collection of compact marked almost-toric systems of dimension $4$.
\end{definition}

\section{The classification}
\label{sec:classification}

The base of an almost-toric fibration has a geometric structure very similar to a toric base. 
Indeed, the only difference between an almost-toric and a toric base (on a local level) is the presence of nodes which in turn have neighborhoods whose affine structures (on the complements of the nodes) depend only on the multiplicity of the node.

\subsection{Almost-toric closed disks}
\label{ssec:ator-disk}

To specify the singular affine manifolds that could appear as bases of almost-toric manifolds, we first define some local models, similar to Euclidean spaces being local models of manifolds.
Let the \emph{standard toric base} be $(\R_{\geq0}^2, \calA_0, \calS_\rme)$ where $\calS_\rme = \Set{\R_{>0}^2, \R_{>0} \times \Set{0} \cup \Set{0} \times \R_{>0}, \Set{0}\!^2, \emptyset}$.
The \emph{standard focus-focus base} $(\R^2 \setminus \Set{0}, \calA_m, \calS_\rmf)$ of \emph{multiplicity} $m \in \N$ where the fiber $(\Period_m)_z$ of $\calA_m$ over $z = (x, y)$ is given by
\begin{align*}
  \Z \der x \oplus \Z \Pa{ -\frac{m}{2\pi} \Im \ln z \der x - \frac{m}{2\pi} \Re \ln z \der y}
\end{align*}
and $\calS_\rmf = \Set{\R^2 \setminus \Set{0}, \emptyset, \emptyset, \emptyset}$.

The following definition specializes \cite[Definition 5.1]{MR2024634}.

\begin{definition}
  An \emph{almost-toric closed disk} $(B, \calA, \calS)$ is a compact subset $B$ of $\R^2$ equipped with a stratification $\calS = \Set{B^{(2)}, B^{(1)}, B^{(0)}, B_\rmf}$, and an integral affine structure $\calA$  on $B \setminus B_\rmf$ such that
  \begin{itemize}
    \item the stratification $\calS$ satisfies \cref{lem:strat};
    \item each $c \in B \setminus B_\rmf$ has a neighborhood $U$ such that $(U, \calA)$ is affine isomorphic to a neighborhood of $(\R_{>0}^2, \calA_0)$ at a point in the stratum of the same dimension as $c$;
    \item each $c \in B_\rmf$ has a neighborhood $U$ such that $(U \setminus \Set{c}, \calA)$ is affine isomorphic to a neighborhood of the puncture in $(\R^2 \setminus \Set{0}, \calA_m, \calS_\rmf)$ for some $m \in N$.
  \end{itemize}
\end{definition}

For an almost-toric closed disk, we can also give an ordering $(c_i)_{i = 1}^\vf$ of $B_\rmf$.
Let $E_i \colon (U_i \setminus \Set{c_i}, \calA) \to (\R^2 \setminus \Set{0}, \calA_m)$ be an orientation-preserving affine embedding for $U_i$ a neighborhood of $c_i$ in $B$.
Let $J_i = \proj_1 \circ E_{i, u}$ be the abscissa, and we let $\ell_i$ be the level set of $J_i$ through $c_i$ in $U_i$, called the \emph{eigenline} (of the monodromy) at $c_i$.
A \emph{cut ray} $\epsilon_i$ at $c_i$ is one of $\ell_i^\pm$ where
\begin{align*}
  \ell_i^+ &= E_i^{-1} \Pa{\Set{0} \times [0, \infty)}, &\ell_i^- &= E_i^{-1} \Pa{\Set{0} \times (-\infty, 0]}.
\end{align*}
The \emph{multiplicity} of $c$ in $(B, \calA, \calS)$ equals $m$.
One can check that the above definitions for an almost-toric closed disk match those for an almost-toric system whose base is that disk.

\begin{definition}
  An \emph{isomorphism} $G$ from an almost-toric closed disk $(B, \calA, \calS)$ to $(B', \calA', \calS')$ is an orientation-preserving diffeomorphism $G \colon B \to B'$ that preserves the stratification, in the sense that $G(B^{(k)}) = (B')^{(k)}$ and $G(B_\rmf) = B'_\rmf$, and is an affine map in the sense that $G^* \calA' = \calA$.
\end{definition}

Combining the analysis of the local structures of bases of almost-toric systems and \cite[Proposition 3.4]{MR2670163}, \cite[Theorem 5.2]{MR2024634}, we write down the following conclusion.

\begin{theorem} 
  The base of any almost-toric system $(M, \om, \mu)$ is an almost-toric closed disk, and any almost-toric closed disk $(B, \calA, \calS)$ is the base of an almost-toric system.
\end{theorem}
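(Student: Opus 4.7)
The plan is to prove the two implications separately, since both directions amount to combining the local normal forms from \cref{sec:ator-sys} with existing gluing results in the literature. For the forward direction, given $(M, \om, \mu) \in \tcalM$, the stratification $\calS$ and the period bundle $\calA$ on $B \setminus B_\rmf$ are already constructed in \cref{ssec:ator-base}, and \cref{lem:strat} shows that $\calS$ satisfies the axiom of the frontier. What remains is to exhibit a local affine model at each point of $B$. At a regular value, the Arnold--Liouville theorem produces action coordinates that identify the period lattice with the standard lattice $\calA_0$ in $\R^2_{>0}$. At an elliptic-regular or elliptic-elliptic point, the Eliasson normal form recalled in \cref{sec:ator-sys}, together with the fact that the $S^1$-action generated by $(x_1^2+\xi_1^2)/2$ has period $2\pi$, shows that the local image is a half-plane or quadrant with the standard lattice. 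At a focus-focus value $c_i$ of multiplicity $m_i$, the function $\tldS_i$ from \cref{ssec:taylor-series} provides a multivalued second affine coordinate whose monodromy around $c_i$ realizes the $m_i$-th power of the standard focus-focus monodromy, thereby identifying $(U_i \setminus \Set{c_i}, \calA)$ with a punctured neighborhood of $(\R^2 \setminus \Set{0}, \calA_{m_i})$.

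For the reverse direction, given an almost-toric closed disk $(B, \calA, \calS)$, I would construct $(M, \om, \mu)$ by gluing local pieces. Away from $B_\rmf$, the space $(B \setminus B_\rmf, \calA)$ is an integral affine manifold with corners, and the Delzant-type construction used in \cite{MR2024634} produces a symplectic four-manifold with corners carrying a smooth proper momentum map onto $B \setminus B_\rmf$. Over a punctured neighborhood $U_i \setminus \Set{c_i}$ of each node, the affine-isomorphism hypothesis in the definition of an almost-toric closed disk allows one to arrange that the local piece agrees, up to fiber-preserving symplectomorphism, with a neighborhood of the nodal fiber in the standard focus-focus model of multiplicity $m_i$; plumbing in $m_i$ nodal fibers at each $c_i$ completes the construction. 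This gluing procedure is precisely the content of \cite[Proposition 3.4]{MR2670163} and \cite[Theorem 5.2]{MR2024634}.

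The main obstacle, and really the only nontrivial computation, is matching the period bundle of an almost-toric system near a focus-focus value to the explicit lattice $\calA_m$ described in \cref{ssec:ator-disk}. This reduces to a monodromy calculation: taking a small loop around $c_i$ transverse to the cut ray $\ell_i^+$, one evaluates the jump of $\tldS_i$ and shows that the resulting holonomy fixes $\der J_i$ and sends $\der A_i^2$ to $\der A_i^2 + m_i \der J_i$. The jump of $K_+$ across its branch cut contributes exactly one unit per focus-focus point in $\mu^{-1}(c_i)$, so summing over $u \in \Z_{m_i}$ yields the factor $m_i$. The relevant multiplicity analysis appears in~\cite{MR4694440}, where $\tldS_i$ is designed so that its monodromy picks up the desired factor, and the remaining assembly is linear in the contributions from distinct $p_{i, u}$.
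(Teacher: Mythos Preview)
Your proposal is correct and follows the same route as the paper: the paper does not give a proof but simply records the theorem as a consequence of the local analysis in \cref{sec:ator-sys,ssec:ator-base,ssec:taylor-series} together with \cite[Proposition~3.4]{MR2670163} and \cite[Theorem~5.2]{MR2024634}, which are exactly the ingredients you invoke. Your write-up is in fact more detailed than the paper's treatment, spelling out the monodromy computation near a focus-focus value that the paper leaves implicit.
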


\begin{definition}
  A \emph{marked almost-toric closed disk} is a tuple
  \begin{align*}
    \Pa{(B, \calA, \calS), (c_i)_{i = 1}^\vf, (\epsilon_i)_{i = 1}^\vf}
  \end{align*}
  where $(B, \calA, \calS)$ is an almost-toric closed disk with an ordering $(c_i)_{i = 1}^\vf$ of $B_\rmf$ and a tuple of cut rays $(\epsilon_i)_{i = 1}^\vf$.
  An \emph{isomorphism} $G$ from a marked almost-toric closed disk $\Pa{(B, \calA, \calS), (c_i)_{i = 1}^\vf, (\epsilon_i)_{i = 1}^\vf}$ to $\Pa{(B', \calA', \calS'), (c'_i)_{i = 1}^\vf, (\epsilon'_i)_{i = 1}^\vf}$ is an isomorphism from an almost-toric closed disk $(B, \calA, \calS)$ to $(B', \calA', \calS')$ such that $c'_i = G (c_i)$ and $\epsilon'_i = G(\epsilon_i)$ for each $i \in \Set{1, \dotsc, \vf}$.
\end{definition}

The orientation-preserving symplectic equivalence between elements in $\tcalM_0$, if exists, is unique, unless $\vf = 0$.

\subsection{The focus-focus labels}
\label{ssec:focus-label}

Let $\R[[X, Y]]$ denote the set of Taylor series in two variables with real coefficients and $\R_0[[X, Y]]$ be the subset of those which have zero constant terms. 

\begin{definition} \label{def:focus-label}
  A \emph{focus-focus label of multiplicity $m \in \N$} is the orbit $\sfl = [\sfs_u, \sfg_{u, v}]_{u, v \in \Z_m}$ of a tuple
  \begin{equation*}
    (\sfs_u, \sfg_{u, v})_{u, v \in \Z_m} \in (\R_0[[X, Y]]/(2\pi X \Z))^{\Z_m} \times (\R_0[[X, Y]])^{\Z_m^2}
  \end{equation*}
  that satisfies
  \begin{equation*} 
    \begin{cases}
      (\sfg_{u, v})^{(0, 1)} > 0, \\
      \sfs_u(X, Y) = \sfs_v(X, \sfg_{u, v}(X, Y)), \\
      \sfg_{u, u}(X, Y) = Y, \\
      \sfg_{u, w}(X, Y) = \sfg_{v, w}(X, \sfg_{u, v}(X, Y)),
    \end{cases}
  \end{equation*}
  for $u, v, w \in \Z_m$, under the action of $\Z_m$ by 
  \begin{equation*}
    [z] \cdot (\sfs_u, \sfg_{u, v})_{u, v \in \Z_m}  = (\sfs_{u + z}, \sfg_{u + z, v + z})_{u, v \in \Z_m}
  \end{equation*}
  for $z \in \Z$.
\end{definition}

\begin{definition} [{\cite[Definitions~3.6~and~3.7]{MR4694440}}] \label{def:Taylor-series-abs}
  The \emph{focus-focus label of $c$} is $\sfl^c = [\sfs^c_u, \sfg^c_{u, v}]_{u, v \in \Z_{m_c}}$ of multiplicity $m_c$.
\end{definition}

\begin{proposition}
  The Taylor series invariant at $c_i$ of multiplicity $m_i$ is a focus-focus label of multiplicity $m_i$.
\end{proposition}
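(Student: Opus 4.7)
The plan is to pick a representative $(\sfs_{i,u}, \sfg_{i,u,v})_{u,v \in \Z_{m_i}}$ of $\sfl_i$ and check that it satisfies the four defining identities of \cref{def:focus-label}; since all four identities are manifestly invariant under the $\Z_{m_i}$-action given in \cref{def:Taylor-series-label}, passing to the orbit is automatic.

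The key structural observation is that $\proj_1 \circ E_{i,u} = J_i = \proj_1 \circ E_{i,v}$, so the transition diffeomorphism fixes the first coordinate and has the form
\begin{equation*}
  E_{i,v} \circ E_{i,u}^{-1}(X, Y) = \Pa{X,\,\phi_{i,u,v}(X, Y)}, \qquad \phi_{i,u,v}(0,0) = 0.
\end{equation*}
This instantly yields three of the four required identities. Taking $u = v$ gives $\phi_{i,u,u}(X,Y) = Y$, so $\sfg_{i,u,u} = Y$. Composing two transitions gives $\phi_{i,u,w}(X,Y) = \phi_{i,v,w}(X,\phi_{i,u,v}(X,Y))$; passing to Taylor series at the origin (which is legitimate because $\phi_{i,u,v}(0,0)=0$) produces the cocycle identity for $\sfg$. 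Positivity follows from orientation-preservation: the Jacobian of $E_{i,v} \circ E_{i,u}^{-1}$ equals $\partial_Y \phi_{i,u,v} > 0$, so in particular $(\sfg_{i,u,v})^{(0,1)} = \partial_Y \phi_{i,u,v}(0,0) > 0$. That the $\sfg_{i,u,v}$ lie in $\R_0[[X,Y]]$ is exactly the statement $\phi_{i,u,v}(0,0) = 0$.

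The one identity that involves more than the transition map itself is the compatibility $\sfs_u(X,Y) = \sfs_v(X, \sfg_{u,v}(X,Y))$. The crucial point is that $\tldS_i$ is defined intrinsically on $U_i$—independently of the index $u$—which is precisely why \cite[Lemma 3.5]{MR4694440} is invoked to extend $\tldS_i$ smoothly across $c_i$. From $E_{i,u}^{-1}(X,Y) = E_{i,v}^{-1}(X, \phi_{i,u,v}(X,Y))$ we therefore get the functional identity
\begin{equation*}
  \tldS_i \circ E_{i,u}^{-1}(X,Y) = \tldS_i \circ E_{i,v}^{-1}\Pa{X, \phi_{i,u,v}(X,Y)},
\end{equation*}
and expanding both sides in Taylor series at the origin gives $\tsfs_{i,u}(X,Y) = \tsfs_{i,v}(X, \sfg_{i,u,v}(X,Y))$ in $\R[[X,Y]]$. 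Reducing mod $2\pi X \Z$ yields the desired identity for $\sfs_{i,u}$; the reduction is well-defined on substitution because the ambiguity lives in the first variable $X$, which is preserved by $\sfg_{i,u,v}$.

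The last housekeeping item is that $\tsfs_{i,u}$ has zero constant term, i.e., belongs to $\R_0[[X,Y]]$; since $A_i^2$ is only constrained through its differential, we may normalize $A_i^2(c_i) = 0$ without affecting any of the defining conditions, and then $\tldS_i(c_i) = 0$ since $K_+(0) = 0$. The hardest part is really the bookkeeping around the $\R_0[[X,Y]] / (2\pi X \Z)$ quotient and the intrinsic nature of $\tldS_i$; no genuine analytic obstacle arises, because the chart-dependence of $\tldS_i \circ E_{i,u}^{-1}$ is entirely captured by $\phi_{i,u,v}$, and the rest is formal Taylor expansion.
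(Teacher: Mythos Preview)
The paper states this proposition without proof, implicitly deferring to \cite[Definitions~3.6~and~3.7]{MR4694440} via \cref{def:Taylor-series-abs}; your write-up supplies the direct verification that the paper omits, and the argument is correct. Your key structural observation that $\proj_1\circ E_{i,u}=J_i$ forces the transition map to have the form $(X,\phi_{i,u,v}(X,Y))$ is exactly what drives all four axioms, and your treatment of the compatibility $\sfs_u(X,Y)=\sfs_v(X,\sfg_{u,v}(X,Y))$ via the chart-independence of $\tldS_i$ on $U_i$ (it involves the \emph{sum} over all $u$, not a distinguished one) is the right point. The normalization $A_i^2(c_i)=0$ to force zero constant term is legitimate and worth making explicit, since the paper's definition of local affine coordinates constrains only $\der A_i^2$; this is the one place where a reader might otherwise wonder why $\sfs_{i,u}\in\R_0[[X,Y]]/(2\pi X\Z)$ rather than $\R[[X,Y]]/(2\pi X\Z)$.
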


\subsection{The relative twisting index}
\label{ssec:twist-index}

In semitoric systems, the twisting index invariant $k_i$ was originally defined in \cite{MR2534101} by comparing $A \circ \mu$ with a local preferred momentum map.
Since there is no preferred momentum map in the almost-toric case, we rather define the relative twisting index instead.

\begin{definition} \label{def:twisting-index}
  Let $\Pa{(M, \om, \mu), (c_i)_{i = 1}^\vf, (\epsilon_i)_{i = 1}^\vf}$ and $\Pa{(M', \om', \mu'), (c'_i)_{i = 1}^\vf, (\epsilon'_i)_{i = 1}^\vf}$ be two marked systems in $\tcalM_0$ with an isomorphism $G$ between their bases as marked almost-toric closed disks the same Taylor series labels.
  For $i \in \Set{1, \dotsc, \vf}$ and for any $u \in \Z_{m_i}$, let $\tsfs_{i, u}$ and $\tsfs'_{i, u}$ be the relative action Taylor series of the two marked systems respectively at $p_{i, u}$, with respect to a choice of local affine coordinate maps $A_i$ at $c_i$ and $A'_i$ at $c'_i$ with $A'_i \circ G = A_i$, and there is an integer $\sfk_i \in \Z$ such that $\tsfs'_{i, u} - \tsfs_{i, u} = 2k_i\pi X$ for any $u \in \Z_{m_i}$.
  We call $\sfk_i$ the \emph{twisting index} of $\Pa{(M', \om', \mu'), (c'_i)_{i = 1}^\vf, (\epsilon'_i)_{i = 1}^\vf}$ relative to $\Pa{(M, \om, \mu), (c_i)_{i = 1}^\vf, (\epsilon_i)_{i = 1}^\vf}$ at $c_i$.
\end{definition}

\begin{remark}
  Observe that, as in the case of semitoric systems in \cite{MR4797864}, the twisting index at $c_i$ is independent of the choice of $u \in \Z_{m_i}$.
  In principle, it is possible to use the relative action Taylor series instead in the Taylor series invariants so as to absorb the twisting indices as was done in \cite[Section~6]{MR4797864}; we choose not to do so until we find a better representation.
\end{remark}

\subsection{Classification in terms of base}
\label{sec:classification-base}

Initially, one wants to classify almost-toric systems, but there are symmetries on the bases unless we fix an ordering and a cut ray at each focus-focus value.

\begin{definition}
  A symplectic equivalence $G$ from a system $(M, \om, \mu)$ to $(M', \om', \mu')$ in $\tcalM$ is a \emph{symplectic equivalence} from a marked system $\Pa{(M, \om, \mu), (c_i)_{i = 1}^\vf, (\epsilon_i)_{i = 1}^\vf}$ to $\Pa{(M', \om', \mu'), (c'_i)_{i = 1}^\vf, (\epsilon'_i)_{i = 1}^\vf}$ in $\tcalM_0$ if $c'_i = G (c_i)$ and $\epsilon'_i = G(\epsilon_i)$ for each $i \in \Set{1, \dotsc, \vf}$.
  Let $\calM_0$ denote the collection of symplectic equivalence classes via orientation-preserving diffeomorphisms of marked systems in $\tcalM_0$.
\end{definition}

Let $\Pa{(M, \om, \mu), (c_i)_{i = 1}^\vf, (\epsilon_i)_{i = 1}^\vf} \in \tcalM_0$.
We assign to it an almost-toric closed disk, and to each marked point a focus-focus label and a twisting index.
These are packaged into a complete almost-toric ingredient.

\begin{definition} \label{def:Xhat}
  We denote by $\tcalX$ the set of tuples
  \begin{equation*}
    \Pa{(B, \calA, \calS), (c_i)_{i = 1}^\vf, (\epsilon_i)_{i = 1}^\vf, (\sfl_i)_{i = 1}^\vf, (\sfk_i)_{i = 1}^\vf}
  \end{equation*}
  such that
  \begin{itemize}
    \item $(B, \calA, \calS)$ is an almost-toric closed disk;
    \item $(c_i)_{i = 1}^\vf$ is an ordering of $B_\rmf$ with $m_i$ the multiplicity of $c_i$; 
    \item and for each $i \in \Set{1, \dotsc, \vf}$, $\epsilon_i$ is one of the two cut rays at $c_i$, 
    \item $\sfl_i$ is a focus-focus label \emph{compatible} with $(B, \calA, \calS)$, in the sense that $\sfl_i$ has multiplicity $m_i$,
    \item and $\sfk_i \in \Z$.
  \end{itemize}
  A \emph{complete almost-toric ingredient} is a class of elements of $\tcalX$ under the equivalence relation
  \begin{multline*}
    \Pa{(B, \calA, \calS), (c_i)_{i = 1}^\vf, (\epsilon_i)_{i = 1}^\vf, (\sfl_i)_{i = 1}^\vf, (\sfk_i)_{i = 1}^\vf} \\
    \sim \Pa{(B', \calA', \calS'), (G(c_i))_{i = 1}^\vf, (G(\epsilon_i))_{i = 1}^\vf, (\sfl_i)_{i = 1}^\vf, (\sfk_i)_{i = 1}^\vf}
  \end{multline*}
  where $G$ is an isomorphism of almost-toric closed disks from $(B, \calA, \calS)$ to $(B', \calA', \calS')$.
  We denote by $\calX$ the set of complete almost-toric ingredients.
\end{definition}

For any marked system in $\tcalM_0$ we assign to it an element of $\tcalX$:
\begin{align*}
  \trmi_0 \Pa{(M, \om, F), (c_i)_{i = 1}^\vf, (\epsilon_i)_{i = 1}^\vf} = \Pa{(B, \calA, \calS), (c_i)_{i = 1}^\vf, (\epsilon_i)_{i = 1}^\vf, (\sfl_i)_{i = 1}^\vf, (\sfk_i)_{i = 1}^\vf}
\end{align*}
where
\begin{itemize}
  \item $(B, \calA, \calS)$ is the base of $(M, \om, \mu)$;
  \item $\sfl_i$ is the Taylor series invariant at $c_i$, and 
  \item $\sfk_i \in \Z$ is the twisting index relative to the reference system.
\end{itemize}

Given any isomorphism class $\left[(B, \calA, \calS), (c_i)_{i = 1}^\vf, (\epsilon_i)_{i = 1}^\vf\right]$ of marked almost-toric closed disks and any tuple of compatible focus-focus labels $(\sfl_i)_{i = 1}^\vf$, a reference system, as mentioned above, is chosen in the subset of $\calM_0$ consisting of those with prescribed base and Taylor series invariants.
Then we assign to any marked system in this subset its twisting index $(\sfk_i)_{i = 1}^\vf \in \Z^\vf$ relative to this reference marked system.

\begin{lemma} \label{lem:atoric-classif-uniq}
  If $\calF, \calF' \in \tcalM_0$ are symplectically equivalent via an orientation-preserving $G$, then $\trmi_0 (\calF)$ is isomorphic to $\trmi_0 (\calF')$ via $G$ as elements of $\tcalX$.
\end{lemma}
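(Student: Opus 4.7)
The plan is to show that $G$ descends to a base-level map that simultaneously matches all five layers of data appearing in the complete almost-toric ingredient of \cref{def:Xhat}. First I would check that $G$ induces an isomorphism $\bar G$ of marked almost-toric closed disks: since $G$ is fiber-preserving, the relation $\mu' \circ G = \bar G \circ \mu$ defines an orientation-preserving diffeomorphism $\bar G \colon B \to B'$. Eliasson's classification of nondegenerate singular points is a symplectic invariant, so $G$ matches critical types, and thus $\bar G$ preserves the stratification $\calS$. On $B \setminus B_\rmf$, the period lattice at $c$ is defined through the isotropy of the cotangent action on $\mu^{-1}(c)$ as in \cref{def:base}, and symplectomorphisms conjugate Hamiltonian flows, so $\bar G^* \calA' = \calA$; that is, $\bar G$ is affine. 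The hypothesis that $G$ respects the ordered focus-focus values and cut rays then promotes $\bar G$ to a marked isomorphism.

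Second, I would match the Taylor series labels $\sfl_i = \sfl'_i$ at each $c_i$. The idea is to transport the Eliasson chart $\psi_{i, u}$ and the base diffeomorphism $E_{i, u}$ used on the $\calF$ side to $\psi'_{i, u} := \psi_{i, u} \circ G^{-1}$ and $E'_{i, u} := E_{i, u} \circ \bar G^{-1}$ on the $\calF'$ side. These are again admissible for $\calF'$, and because $G$ is a symplectomorphism the flow of $X_{\proj_2 \circ E_{i, u}}$ is sent to that of $X_{\proj_2 \circ E'_{i, u}}$, so the preferred cyclic ordering is transported on the nose. With these choices the defining formula for $\tldS_i$ transports to $\tldS'_i = \tldS_i \circ \bar G^{-1}$, and Taylor-expanding at the origin in the matched coordinates shows that the action Taylor series and transition Taylor series agree coefficient by coefficient, so their $\Z_{m_i}$-orbits coincide as focus-focus labels.

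For the twisting indices I would further take compatible local affine coordinates $A'_i := A_i \circ \bar G^{-1}$, which makes the relative action Taylor series match as well. Since $\sfk_i$ and $\sfk'_i$ are both computed against a fixed common reference system, the identity $\tsfs'_{i, u} = \tsfs_{i, u}$ under these compatible choices forces $\sfk'_i = \sfk_i$. Assembling these three pieces produces exactly the equivalence relation of \cref{def:Xhat} between $\trmi_0(\calF)$ and $\trmi_0(\calF')$, realized by $\bar G$. The main obstacle is the second step: one has to confirm that transporting Eliasson charts and preferred cyclic orderings through $G$ lands inside the admissible choices used to define the invariants, and that the coefficient identifications survive the $\Z_{m_i}$-orbit quotient appearing in \cref{def:Taylor-series-label}; once the compatible transport is arranged, the third step is essentially automatic.
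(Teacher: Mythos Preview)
Your proposal is correct and follows the same approach as the paper's own proof, which is extremely terse: it simply observes that $G$ is an isomorphism of marked almost-toric closed disks and then appeals to \cref{def:Taylor-series} and the definition of the twisting index. You unpack those appeals in detail---transporting Eliasson charts, checking that the period lattice pulls back, and matching the relative action series via compatible $A_i$---but the logical skeleton is identical. One minor notational mismatch: in the paper's conventions the letter $G$ already denotes the base diffeomorphism $B \to B'$ (the lifting symplectomorphism is called $\varphi$), so your $\bar G$ is the paper's $G$.
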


\begin{proof}
  Let
  \begin{align*}
    \calF &= \Pa{(M, \om, \mu), (c_i)_{i = 1}^\vf, (\epsilon_i)_{i = 1}^\vf}, &\calF' &= \Pa{(M', \om', \mu'), (c'_i)_{i = 1}^\vf, (\epsilon'_i)_{i = 1}^\vf}.
  \end{align*}
  If $G$ takes $(M, \om, \mu)$ to $(M', \om' , \mu')$, then $G$ is an isomorphism from $(B, \calA, \calS)$ to $(B', \calA', \calS')$, and is already an isomorphism between marked almost-toric closed disks.
  We obtain $\sfl_i = \sfl'_i$ and $\sfk'_i = \sfk_i$ by \cref{def:Taylor-series} and their own definitions.
\end{proof}

\begin{corollary} \label{cor:atoric-classif-uniq}
  The map $\rmi_0$ given by
  \begin{align*}
    \rmi_0 \colon \calM_0  &\longrightarrow \calX, \\ 
    [\calF] &\longmapsto [\trmi_0 (\calF)]
  \end{align*}
  where $[\cdot]$ denote the isomorphism classes is well-defined.
\end{corollary}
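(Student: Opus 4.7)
The plan is essentially to unfold definitions and apply \cref{lem:atoric-classif-uniq} directly; the corollary is a formal consequence once one identifies the two equivalence relations at play. Start with two representatives $\calF, \calF' \in \tcalM_0$ of the same class in $\calM_0$, so by definition there is an orientation-preserving symplectic equivalence $G \colon \calF \to \calF'$ in the marked sense (sending $c_i$ to $c'_i$ and $\epsilon_i$ to $\epsilon'_i$). The goal is to deduce $[\trmi_0(\calF)] = [\trmi_0(\calF')]$ as elements of $\calX$.

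Next, feed $G$ into \cref{lem:atoric-classif-uniq} to conclude that $\trmi_0(\calF)$ and $\trmi_0(\calF')$ are isomorphic via $G$ as elements of $\tcalX$. Unpacking this, $G$ restricts to an isomorphism of marked almost-toric closed disks from $(B, \calA, \calS)$ to $(B', \calA', \calS')$ with $G(c_i) = c'_i$ and $G(\epsilon_i) = \epsilon'_i$, and moreover $\sfl_i = \sfl'_i$ and $\sfk_i = \sfk'_i$. But this is precisely the definition of the equivalence relation on $\tcalX$ given in \cref{def:Xhat}, so $\trmi_0(\calF) \sim \trmi_0(\calF')$, hence $[\trmi_0(\calF)] = [\trmi_0(\calF')]$ in $\calX$, which is what we wanted.

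One small point worth spelling out is the role of the reference system used in the definition of the twisting index: provided the same reference marked system is used on both sides (which we may arrange since $\calF$ and $\calF'$ share the same isomorphism class of marked base and the same Taylor series labels), the relative twisting indices $\sfk_i$ and $\sfk'_i$ are computed against a common baseline, and their equality is then guaranteed by \cref{lem:atoric-classif-uniq}. There is no real obstacle here; the corollary is essentially a bookkeeping statement, and the only thing one must not slip on is verifying that the equivalence relation on $\tcalX$ in \cref{def:Xhat} matches exactly what \cref{lem:atoric-classif-uniq} delivers, namely an isomorphism of marked almost-toric closed disks together with equalities of the discrete data $(\sfl_i)$ and $(\sfk_i)$.
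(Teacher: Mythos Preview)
Your proposal is correct and matches the paper's approach: the paper states \cref{cor:atoric-classif-uniq} as an immediate corollary of \cref{lem:atoric-classif-uniq} with no separate proof, and your argument simply makes explicit the unwinding of definitions that justifies this. Your remark about the reference system is also apt and consistent with the paper's convention that the reference is fixed once per isomorphism class of marked base together with Taylor series labels.
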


\begin{lemma} \label{lem:atoric-classif-inj}
  If $\calF, \calF' \in \tcalM_0$, and $\trmi_0 (\calF)$ is isomorphic to $\trmi_0 (\calF')$ as elements of $\tcalX$, then $\calF$ and $\calF'$ are symplectically equivalent via an orientation-preserving map.
\end{lemma}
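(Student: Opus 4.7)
The plan is to construct a fiber-preserving, orientation-preserving symplectomorphism $\varphi \colon (M, \om) \to (M', \om')$ with $\mu' \circ \varphi = G \circ \mu$, where $G$ is the isomorphism of marked almost-toric closed disks from the base of $\calF$ to that of $\calF'$ underlying the given isomorphism in $\tcalX$. The strategy mirrors the semitoric classification proofs of \cite{MR2784664, MR4797864}: build local symplectic equivalences over an open cover of $B$ and glue them into a single global symplectomorphism.

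After pulling back $(M', \om', \mu')$ by $G$, I may assume both systems share the same base $(B, \calA, \calS)$ with identical marked data, equal focus-focus labels $\sfl_i$, and vanishing relative twisting indices. I would then cover $B$ by three families of opens: small neighborhoods $U_i$ of each focus-focus value $c_i$; neighborhoods of each point of $B^{(0)} \cup B^{(1)}$; and opens contained in the regular stratum $B^{(2)}$. Over the second family, Eliasson's normal form for elliptic-regular and elliptic-elliptic singularities supplies a common local symplectic model. Over the third, Arnold--Liouville action-angle coordinates together with the identification of period bundles $\calA$ yield local isomorphisms of $T^2$-bundles. Over each $U_i$, the equality of the Taylor series invariant $\sfl_i$ together with the vanishing of the relative twisting index is precisely the semiglobal invariant established in \cite{MR4694440} for fiber-preserving symplectomorphisms of saturated neighborhoods of nodal fibers.

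The main obstacle is the global gluing. Two local symplectic equivalences over a regular overlap differ by a fiber-preserving symplectic automorphism, which on $B^{(2)}$ is a fiberwise translation by a smooth section of the $T^2$-bundle; the collection of such discrepancies assembles into a $1$-cocycle in the sheaf of those sections. Contributions coming from overlaps across a cut ray $\epsilon_i$ and from the boundary of $U_i$ encode, respectively, the action Taylor series modulo $2\pi X \Z$ and the relative twisting index---the very data controlled by \cref{def:Taylor-series} and \cref{def:twisting-index}. Since these agree for $\calF$ and $\calF'$, the cocycle is a coboundary; a partition-of-unity adjustment in the fiber direction, in the spirit of \cite{MR2722115, MR4797864}, then modifies the local symplectomorphisms so that they agree on overlaps and patch into the desired global $\varphi$.
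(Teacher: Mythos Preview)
Your outline is broadly correct and shares the local-to-global philosophy of the paper, but the organization differs. The paper does not run a \v{C}ech cocycle argument over all of $B$; instead it works at the level of base maps. From $\sfl_i = \sfl'_i$ it invokes \cite[Lemma~4.2]{MR4694440} to obtain a semiglobal symplectomorphism over a neighborhood $U_i$ lifting some diffeomorphism $G_i$, then compares $G_i$ with the given global base isomorphism $G$: in the local affine coordinates they differ by a twist $T_i^{d_i}$, and the equality $\sfk_i = \sfk'_i$ is precisely what forces $d_i = 0$, so $G_i^{-1}\circ G$ is flat at $c_i$. After deforming $G$ to coincide with each $G_i$ near $c_i$, the paper delegates the remaining construction wholesale to \cite[Proposition~4.2]{MR4797864}. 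Your cocycle formulation is a legitimate and more self-contained alternative, but one point of attribution should be corrected: the semiglobal invariant of \cite{MR4694440} is $\sfl_i$ alone, not $\sfl_i$ together with the twisting index. The twisting index enters only at the matching stage---in the paper's language it is what kills $d_i$; in yours it is what trivializes the discrepancy cocycle on the annulus where the semiglobal patch meets the rest of $B$.
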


\begin{proof}
  Let
  \begin{align*}
    \calF &= \Pa{(M, \om, \mu), (c_i)_{i = 1}^\vf, (\epsilon_i)_{i = 1}^\vf}, &\calF' &= \Pa{(M', \om', \mu'), (c'_i)_{i = 1}^\vf, (\epsilon'_i)_{i = 1}^\vf}.
  \end{align*}
  Suppose
  \begin{align*}
    \trmi_0 (\calF) &= \Pa{(B, \calA, \calS), (c_i)_{i = 1}^\vf, (\epsilon_i)_{i = 1}^\vf, (\sfl_i)_{i = 1}^\vf, (\sfk_i)_{i = 1}^\vf} \\
    \trmi_0 (\calF') &= \Pa{(B', \calA', \calS'), (c'_i)_{i = 1}^\vf, (\epsilon'_i)_{i = 1}^\vf, (\sfl_i)_{i = 1}^\vf, (\sfk_i)_{i = 1}^\vf}
  \end{align*}
  are isomorphic via $G$.
  Then $G$ is an isomorphism from $(B, \calA, \calS)$ to $(B', \calA', \calS')$ with $c'_i = G (c_i)$ and $\epsilon'_i = G(\epsilon_i)$ for each $i \in \Set{1, \dotsc, \vf}$.
  Since the Taylor series invariants are equal, by \cite[Lemma~4.2]{MR4694440} there is an orientation-preserving $G_i$ from an open neighborhood $U_i$ of $c_i$ to $U'_i$ of $c'_i$, which lifts to a symplectomorphism $\varphi'_i \colon \mu^{-1} (U_i) \to (\mu')^{-1} (U'_i)$.
  In particular, for any $u \in \Z_{m_i}$ if $E_{i, u}$ is an orientation-preserving diffeomorphism about $c_i$ in Eliasson's linearization theorem then $E'_{i, u} = E_{i, u} \circ G_i^{-1}$ is one about $c'_i$.

  Moreover, $G_i^* \Res{\calA'}_{U'_i \setminus \epsilon'_i} = \Res{\calA}_{U_i \setminus \epsilon_i}$ for any $c \in U_i$.
  Compared with $G^* \calA' = \calA$ we deduce that $G_i^{-1} \circ G$ preserves $\Res{\calA}_{U_i \setminus \epsilon_i}$, and then there is an integer $d_i \in \Z$ such that $A_i \circ G_i^{-1} \circ G = T_i^{d_i} \circ A_i$ for any local affine coordinate map $A_i$ at $c_i$.
  Since $\sfk_i = \sfk'_i$, the relative twisting index of $(M', \om' , \mu')$ to $(M, \om, \mu)$ at $c_i$ vanishes.
  By the definition of the relative twisting index, with respect to $A_i$ and a local affine coordinate map $A'_i = A_i \circ G_i^{-1} \circ T_i^{-d_i}$ at $c'_i$, we have $\tsfs'_{i, u} = \tsfs_{i, u}$ for any $u \in \Z_{m_i}$.
  By \cref{eq:def-tS}, we deduce that
  \begin{align*}
    2\pi A_i^{\prime2} \circ G_i \circ E_{i, u}^{-1} - 2\pi A_i^2 \circ E_{i, u}^{-1} = \tldS'_i \circ (E'_{i, u})^{-1}  - \tldS_i \circ E_{i, u}^{-1}
   \end{align*}
  is a flat function which implies that $d_i = 0$.

  Hence, $A_i \circ G_i^{-1} \circ G - A_i$ is a flat function at $c_i$.
  Immitiating the proof of \cite[Proposition~4.2]{MR4797864} we could construct a symplectic equivalence by modifying $G$ so that it coincides with $G_i$ in each $U_i$, between the prescribed marked systems.
  The proof is complete.
\end{proof}

\begin{corollary} \label{cor:atoric-classif-inj}
  The map $\rmi_0$ is injective.
\end{corollary}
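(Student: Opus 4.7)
The plan is to observe that this corollary is essentially a repackaging of Lemma~\ref{lem:atoric-classif-inj} combined with the definitions of the quotient $\calX$ and the map $\rmi_0$. The only substantive content is already in that lemma; what remains is to verify that the definitions match up cleanly.

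First I would take two classes $[\calF], [\calF'] \in \calM_0$ with $\rmi_0([\calF]) = \rmi_0([\calF'])$. Unfolding Corollary~\ref{cor:atoric-classif-uniq}, this means $[\trmi_0(\calF)] = [\trmi_0(\calF')]$ in $\calX$, which by \cref{def:Xhat} means the representatives $\trmi_0(\calF)$ and $\trmi_0(\calF')$ are related by the equivalence relation on $\tcalX$; equivalently, there is an isomorphism $G$ of almost-toric closed disks taking the base, ordering of focus-focus values, and cut rays of $\trmi_0(\calF)$ to those of $\trmi_0(\calF')$, and the focus-focus labels and twisting indices agree. In other words, $\trmi_0(\calF)$ and $\trmi_0(\calF')$ are isomorphic as elements of $\tcalX$.

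Next I would apply Lemma~\ref{lem:atoric-classif-inj} directly to conclude that $\calF$ and $\calF'$ are symplectically equivalent via an orientation-preserving diffeomorphism, hence represent the same class in $\calM_0$. This gives $[\calF] = [\calF']$ and hence the injectivity of $\rmi_0$.

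There is no real obstacle here: the heavy lifting (constructing the global symplectomorphism from the matching of bases, Taylor series, and twisting indices) was carried out in the proof of Lemma~\ref{lem:atoric-classif-inj}. The only thing to watch is that the equivalence relation defining $\calX$ in \cref{def:Xhat} agrees exactly with the notion of isomorphism in $\tcalX$ invoked by Lemma~\ref{lem:atoric-classif-inj}, so that no additional compatibility needs to be verified before quoting the lemma. The proof should therefore be two or three lines.
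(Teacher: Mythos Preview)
Your proposal is correct and matches the paper's approach: the corollary is stated immediately after Lemma~\ref{lem:atoric-classif-inj} with no separate proof, as it is the direct descent of that lemma to the quotients $\calM_0$ and $\calX$, exactly as you describe.
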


It remains to find the range of $\rmi_0$ inside of $\calX$.

\begin{lemma} \label{lem:atoric-classif-surj}
  For any $I \in \calX$, there is an $\calF \in \tcalM_0$ such that $\rmi_0 (\calF) = I$.
\end{lemma}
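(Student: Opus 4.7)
The plan is to build $\calF$ in three stages: realize the base first, then the Taylor series labels, then the twisting indices. Fix a representative $\Pa{(B, \calA, \calS), (c_i)_{i=1}^\vf, (\epsilon_i)_{i=1}^\vf, (\sfl_i)_{i=1}^\vf, (\sfk_i)_{i=1}^\vf}$ of $I$. By the theorem in \cref{ssec:ator-disk} combining \cite[Proposition~3.4]{MR2670163} with \cite[Theorem~5.2]{MR2024634}, some $(M_0, \om_0, \mu_0) \in \tcalM$ has base isomorphic to $(B, \calA, \calS)$; composing $\mu_0$ with that isomorphism lets us assume its base equals $(B, \calA, \calS)$ on the nose. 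Marking it with $(c_i)_{i=1}^\vf$ and $(\epsilon_i)_{i=1}^\vf$ gives a system $\calF_0 \in \tcalM_0$ whose base component already matches $I$.

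Next we realize the prescribed Taylor series labels by surgery near each focus-focus fiber. For each $i$, the semiglobal realizability of focus-focus labels, the existence half of \cite[Lemma~4.2]{MR4694440}, produces an almost-toric neighborhood $(W_i, \om_i, \mu_i)$ of a single multiplicity-$m_i$ focus-focus fiber whose base is affinely isomorphic to a neighborhood of $c_i$ in $(B, \calA, \calS)$ and whose focus-focus label is exactly $\sfl_i$. We excise a saturated neighborhood of $\mu_0^{-1}(c_i)$ from $M_0$ and symplectically glue $W_i$ back in along the overlap, using the affine identification of the two punctured base neighborhoods, which up to isomorphism depends only on $m_i$. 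The cut ray $\epsilon_i$ can be matched during this gluing. After doing so at each $c_i$ we obtain a marked system $\calF^{\mathrm{ref}} \in \tcalM_0$ with base $(B, \calA, \calS)$ and Taylor series labels $(\sfl_i)_{i=1}^\vf$. We take $\calF^{\mathrm{ref}}$ as the reference system attached to this combination, so its twisting indices vanish tautologically.

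Finally, to attain the prescribed integers $(\sfk_i)_{i=1}^\vf$, we modify $\calF^{\mathrm{ref}}$ by a standard twisting operation near each cut ray $\epsilon_i$. In the $E_i$-coordinates the monodromy around $c_i$ is generated by an $\AGL(2, \Z)$-transformation $T_i$ that fixes the abscissa $J_i$; slitting the base along $\epsilon_i$ up to $c_i$ and re-gluing by $T_i^{\sfk_i}$ lifts to a fiber-preserving symplectic cut-and-paste on the total space, exactly as in \cite[Section~5]{MR4797864}. Because $T_i$ fixes $J_i$, this preserves the base, the focus-focus labels, and the fibration away from $\bigcup_i \epsilon_i$; the second component $A_i^2$ of any local affine coordinate jumps by $\sfk_i J_i$ across the cut, so by \cref{def:twisting-index} this shifts $\tsfs_{i, u}$ by exactly $2\pi \sfk_i X$. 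The resulting $\calF$ therefore satisfies $\rmi_0(\calF) = I$.

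The hard part will be the middle step: the surgery must simultaneously preserve the affine base up to isomorphism, preserve the cut ray markings, and install the correct semiglobal germ at each focus-focus fiber. Given the realizability result of \cite{MR4694440} this should reduce to patching standard pieces along affine boundaries; once that is in place, the twisting step is purely local and its effect on the relative action Taylor series is additive by construction, and \cref{cor:atoric-classif-inj} guarantees that the resulting class $[\calF] \in \calM_0$ is unambiguous.
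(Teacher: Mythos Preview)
Your strategy matches the paper's: realize the base via \cite[Proposition~3.4]{MR2670163}, then perform semiglobal surgery at each $c_i$. The paper compresses your second and third stages into one, citing the attaching construction of \cite[Proposition~4.7]{MR4797864} to install a focus-focus neighborhood with the prescribed label $\sfl_i$ \emph{and} twisting index $\sfk_i$ simultaneously; your separation into ``labels first, then twist'' is a legitimate decomposition of that same surgery.

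Two points deserve care. First, declaring your $\calF^{\mathrm{ref}}$ to be \emph{the} reference system is only licit if no reference has yet been fixed; since $\rmi_0$ is already defined relative to a chosen reference, the cleaner phrasing is that $\calF^{\mathrm{ref}}$ has some twisting indices $(\sfk_i^0)$ relative to that reference, and you then twist by $\sfk_i - \sfk_i^0$. Second, and more substantively, your description of the twist as ``slitting the base along $\epsilon_i$ and re-gluing by $T_i^{\sfk_i}$'' is the semitoric picture, where the eigenray extends globally to the boundary of the polygon. In the almost-toric setting $\epsilon_i$ is only a germ at $c_i$, so there is no global cut to make on $B$. The operation that actually works---and the one \cite[Proposition~4.7]{MR4797864} performs---is to re-attach the excised $\mu^{-1}(U_i)$ along its regular collar with the torus-bundle identification composed with the shear $T_i^{\sfk_i}$; this leaves $(B, \calA, \calS)$ and $\sfl_i$ untouched and shifts $\tsfs_{i,u}$ by $2\pi\sfk_i X$ as you claim. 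With that correction your argument is complete and equivalent to the paper's.
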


\begin{proof}
  Let
  \begin{align*}
    I = \left[(B, \calA, \calS), (c_i)_{i = 1}^\vf, (\epsilon_i)_{i = 1}^\vf, (\sfl_i)_{i = 1}^\vf, (\sfk_i)_{i = 1}^\vf\right].
  \end{align*}
  By \cite[Proposition 3.4]{MR2670163}, the almost-toric closed disk $(B, \calA, \calS)$ is the base of some initial almost-toric system $(M, \om, \mu)$ with markings $(c_i)_{i = 1}^\vf, (\epsilon_i)_{i = 1}^\vf$.
  It remains to replace $(\mu^{-1} (U_i), \om, \Res{\mu}_{\mu^{-1} (U_i)})$ where $U_i$ a neighborhood of $c_i$ in $B$, by the semiglobal model with Taylor series invariant $\sfl_i$ and twisting index $\sfk_i$.
  Since the semiglobal system $(\mu^{-1} (U_i), \om, \Res{\mu}_{\mu^{-1} (U_i)})$ is isomorphic to a semitoric one, we can use the proof of \cite[Proposition~4.7]{MR4797864} in the second stage (attaching focus-focus fibrations) and obtain a marked system $\calF' = \Pa{(M', \om', \mu'), (c'_i)_{i = 1}^\vf, (\epsilon'_i)_{i = 1}^\vf}$ so that $\rmi_0 (\calF) = I$.
\end{proof}

\begin{corollary} \label{cor:atoric-classif-surj}
  The map $\rmi_0$ is surjective.
\end{corollary}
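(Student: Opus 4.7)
The plan is to derive the corollary directly from Lemma~\ref{lem:atoric-classif-surj}, with essentially no additional work. Given any $I \in \calX$, I would pick a representative $\left[(B, \calA, \calS), (c_i)_{i=1}^\vf, (\epsilon_i)_{i=1}^\vf, (\sfl_i)_{i=1}^\vf, (\sfk_i)_{i=1}^\vf\right]$ of $I$ in $\tcalX$ and invoke Lemma~\ref{lem:atoric-classif-surj} to obtain a marked system $\calF \in \tcalM_0$ whose assigned tuple $\trmi_0(\calF)$ represents the class $I$. By the definition of $\rmi_0$ in Corollary~\ref{cor:atoric-classif-uniq}, writing $[\calF]$ for the symplectic equivalence class of $\calF$ in $\calM_0$, we obtain $\rmi_0([\calF]) = [\trmi_0(\calF)] = I$. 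Since $I$ was arbitrary, $\rmi_0$ is surjective.

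I do not anticipate any genuine obstacle at the level of this corollary, because the substantive content has already been absorbed into Lemma~\ref{lem:atoric-classif-surj}. There the hard steps are (i) realizing $(B, \calA, \calS)$ together with its markings as the base of some initial almost-toric system via \cite[Proposition~3.4]{MR2670163}, and (ii) performing semiglobal surgery in neighborhoods $\mu^{-1}(U_i)$ of the focus-focus fibers to install the prescribed Taylor series invariants $\sfl_i$ and twisting indices $\sfk_i$, adapting the focus-focus attachment procedure from \cite[Proposition~4.7]{MR4797864}. For the corollary itself, the only compatibility to be checked is that the construction descends to equivalence classes on both sides, which is exactly the well-definedness established in Corollary~\ref{cor:atoric-classif-uniq}.
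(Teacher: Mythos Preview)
Your proposal is correct and matches the paper's approach: the corollary is an immediate consequence of Lemma~\ref{lem:atoric-classif-surj}, and the paper likewise records it without further argument. Your unpacking of the lemma's content (base realization via \cite[Proposition~3.4]{MR2670163} followed by semiglobal surgery as in \cite[Proposition~4.7]{MR4797864}) accurately reflects where the substance lies.
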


We summarize our conclusion.

\begin{theorem}
  The fiber-preserving symplectomorphism classes of marked compact almost-toric systems in dimension four are in one-to-one correspondence with complete almost-toric ingredients by the map $\rmi_0$.
\end{theorem}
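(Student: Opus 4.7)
The plan is to observe that this final theorem is essentially a packaging statement: all the real work has already been done in the three corollaries immediately preceding it. First I would unwind the statement to see that "one-to-one correspondence" means exactly that the map $\rmi_0 \colon \calM_0 \to \calX$ from \cref{cor:atoric-classif-uniq} is a bijection, so it suffices to cite well-definedness, injectivity, and surjectivity.

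Concretely, my proof would be two or three lines. Well-definedness of $\rmi_0$ on equivalence classes is \cref{cor:atoric-classif-uniq}, obtained from \cref{lem:atoric-classif-uniq} by noting that symplectically equivalent marked systems produce isomorphic complete almost-toric ingredients. Injectivity is \cref{cor:atoric-classif-inj}, which comes from \cref{lem:atoric-classif-inj}: if the ingredients agree, one combines the isomorphism $G$ of marked bases with the semiglobal normal forms from \cite{MR4694440} and the gluing trick of \cite[Proposition~4.2]{MR4797864} to promote $G$ to a genuine symplectic equivalence. Surjectivity is \cref{cor:atoric-classif-surj}, obtained in \cref{lem:atoric-classif-surj} by starting from an almost-toric system realizing the base (Leung--Symington), and then modifying its focus-focus neighborhoods so that the Taylor series labels and twisting indices match, via \cite[Proposition~4.7]{MR4797864}.

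There is essentially no obstacle at this stage: the theorem is a direct corollary of \cref{cor:atoric-classif-uniq,cor:atoric-classif-inj,cor:atoric-classif-surj}. If anything, the only thing worth double-checking is that the target of $\rmi_0$ has been correctly quotiented by isomorphisms of marked almost-toric closed disks (so that the choice of representative of $(B,\calA,\calS)$ does not matter), and that the equivalence relation on $\tcalX$ in \cref{def:Xhat} is precisely what is needed for the two corollaries to combine cleanly. Once this is noted, the conclusion "hence $\rmi_0$ is a bijection, which proves the theorem" is immediate.
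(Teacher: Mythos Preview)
Your proposal is correct and matches the paper's approach exactly: the theorem is stated in the paper simply as a summary of the preceding results, with no separate proof, since it follows immediately from \cref{cor:atoric-classif-uniq,cor:atoric-classif-inj,cor:atoric-classif-surj}. Your brief recap of what each corollary contributes is accurate and nothing further is needed.
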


In order to obtain a classification of $\calM$, we use the forgetful map
\begin{align*}
  \pi \colon \tcalM_0 &\longrightarrow \tcalM, \\ 
  \Pa{(M, \om, \mu), (c_i)_{i = 1}^\vf, (\epsilon_i)_{i = 1}^\vf} &\longmapsto (M, \om, \mu)
\end{align*}
On the preimage of $(M, \om, \mu)$ there is an effective action by the group $S_\vf \times \Set{\pm1}^\vf$ as the permutation on $(c_i)_{i = 1}^\vf$ and the flipping on each $\epsilon_i$.
The corresponding partial action of $\frakG_\vf = S_\vf \times \Set{\pm1}^\vf$ on $\tcalX$ is
\begin{equation} \label{eq:action-tX} \begin{split}
  \rho &\Pa{(B, \calA, \calS), (c_i)_{i = 1}^\vf, (\epsilon_i)_{i = 1}^\vf, (\sfl_i)_{i = 1}^\vf, (\sfk_i)_{i = 1}^\vf} \\
  &= \Pa{(B, \calA, \calS), (c_{\rho(i)})_{i = 1}^\vf, (\epsilon_{\rho(i)})_{i = 1}^\vf, (\sfl_{\rho(i)})_{i = 1}^\vf, (\sfk_{\rho(i)})_{i = 1}^\vf},\text{ for }\rho \in S_\vf, \\
  \sigma &\Pa{(B, \calA, \calS), (c_i)_{i = 1}^\vf, (\epsilon_i)_{i = 1}^\vf, (\sfl_i)_{i = 1}^\vf, (\sfk_i)_{i = 1}^\vf} \\
  &= \Pa{(B, \calA, \calS), (c_i)_{i = 1}^\vf, (\sigma_i \epsilon_i)_{i = 1}^\vf, (\sigma_i \sfl_i)_{i = 1}^\vf, (\sigma_i \sfk_i)_{i = 1}^\vf}, \text{ for }\sigma \in \Set{\pm1}^\vf,
\end{split} \end{equation}
where any $1$ component of $\sigma$ gives the trivial action while
\begin{equation} \label{eq:action-tX-sigma} \begin{split}
  (-1) \ell_i^\pm &= \ell_i^\mp, \\
  (-1) [\sfs_u, \sfg_{u, v}]_{u, v \in \Z_m} &= [-\sfs_{-u} \circ \gamma, -\sfg_{-u, -v} \circ \gamma]_{u, v \in \Z_m}, \\
  (-1) \sfk_i &= -\sfk_i + \Delta_\sigma \sfk_i,
\end{split} \end{equation}
with $\gamma (X, Y) = (-X, -Y)$, and $\Delta_\sigma \sfk_i$ is the twisting index of the reference system in $\left[(B, \calA, \calS), (c_i)_{i = 1}^\vf, (\epsilon_i)_{i = 1}^\vf(\sfl_i)_{i = 1}^\vf\right]$ acted by $\sigma$ relative to the reference system in $\left[(B, \calA, \calS), (c_i)_{i = 1}^\vf, (\sigma_i \epsilon_i)_{i = 1}^\vf, (\sigma_i \sfl_i)_{i = 1}^\vf\right]$.
For justification of \cref{eq:action-tX-sigma} see \cite[Section~3.4]{MR4694440}.
We obtain the following classification.

\begin{corollary} \label{cor:atoric-classif}
  The map $\rmi$ given by
  \begin{align*}
    \rmi \colon \calM &\longrightarrow \calX / \sim_\frakG, \\ 
    [\calF] &\longmapsto [\trmi_0 (\calF)]
  \end{align*}
  is a bijection, where $\sim_\frakG$ is the equivalence relation induced by the partial action of $G_\vf$ for all $\vf \in \N$ defined by \cref{eq:action-tX,eq:action-tX-sigma}.
\end{corollary}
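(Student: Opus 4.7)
The plan is to factor $\rmi$ through the already-established bijection $\rmi_0 \colon \calM_0 \to \calX$ (from \cref{cor:atoric-classif-uniq,cor:atoric-classif-inj,cor:atoric-classif-surj}) and exploit the compatibility of the forgetful map $\pi$ with the $\frakG_\vf$-action on markings. The key preliminary step is to verify that $\trmi_0 \colon \tcalM_0 \to \tcalX$ is equivariant with respect to the partial actions: if two marked systems differ only by reordering the focus-focus values via $\rho \in S_\vf$ or flipping cut rays via $\sigma \in \Set{\pm 1}^\vf$, then their images in $\tcalX$ differ by exactly the partial action in \cref{eq:action-tX,eq:action-tX-sigma}. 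The permutation part is immediate. The cut-ray flipping part is essentially the bookkeeping of \cite[Section~3.4]{MR4694440}: swapping $\ell_i^+$ with $\ell_i^-$ forces replacing $E_{i,u}$ by its composition with a $\pi$-rotation, which induces $\gamma(X,Y) = (-X,-Y)$ on Taylor expansions and negates the action; for the reference system it produces the correction $\Delta_\sigma \sfk_i$ by definition.

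With equivariance in hand, well-definedness of $\rmi$ follows: any two choices of markings on a single $(M,\om,\mu)\in\tcalM$ differ by an element of $\frakG_\vf$, and any orientation-preserving symplectic equivalence $\varphi\colon(M,\om,\mu)\to(M',\om',\mu')$ in $\tcalM$ sends one marking to some marking on the target, which can then be transported back to any chosen marking on the target via a further element of $\frakG_\vf$; applying \cref{cor:atoric-classif-uniq} and equivariance shows the image in $\calX/\sim_\frakG$ is independent of all choices.

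Injectivity of $\rmi$ is then a direct consequence of \cref{cor:atoric-classif-inj}: if $[\trmi_0(\calF)] = [\trmi_0(\calF')]$ in $\calX/\sim_\frakG$ for some markings on representatives $\calF,\calF'\in\tcalM_0$, then by equivariance we may re-mark $\calF'$ so that $\trmi_0(\calF)$ and $\trmi_0(\calF')$ become isomorphic in $\tcalX$; \cref{cor:atoric-classif-inj} then produces a marking-preserving symplectic equivalence, in particular an equivalence in $\calM$. Surjectivity follows from \cref{cor:atoric-classif-surj}: given $[I]\in\calX/\sim_\frakG$, pick any representative in $\tcalX$, lift it via $\rmi_0$ to a marked system, and apply $\pi$ to obtain an unmarked preimage. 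I expect the main obstacle to be the equivariance check for cut-ray flipping, and in particular the justification of the formula for $\Delta_\sigma \sfk_i$, since the reference system at the base class $\left[(B,\calA,\calS),(c_i),(\epsilon_i),(\sfl_i)\right]$ is different from the one at $\left[(B,\calA,\calS),(c_i),(\sigma_i\epsilon_i),(\sigma_i\sfl_i)\right]$, so relating their twisting indices requires tracking how both the local affine coordinate $A_i^2$ and the Taylor series $\tldS_i$ transform under cut-ray reflection — a computation that, while routine, must be aligned carefully with the conventions of \cite[Section~3.4]{MR4694440}.
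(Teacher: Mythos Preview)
Your proposal is correct and matches the paper's approach: the paper states \cref{cor:atoric-classif} without an explicit proof, treating it as an immediate consequence of the bijection $\rmi_0$ together with the equivariance of $\trmi_0$ under the $\frakG_\vf$-action described in \cref{eq:action-tX,eq:action-tX-sigma}, which is exactly the factorization you spell out. Your elaboration of equivariance, well-definedness, injectivity, and surjectivity is the standard unpacking of this passage-to-orbits argument, and your caution about $\Delta_\sigma\sfk_i$ is appropriate but already handled by the paper's reference to \cite[Section~3.4]{MR4694440}.
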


\end{document}